\def\@cite#1#2{[{{\bfseries #1}\if@tempswa , #2\fi}]}
\renewcommand{\section}{%
\@startsection{section}{1}{\z@}
{0.5truecm plus -1ex minus -.2ex}%
{1.0ex plus .2ex}{\bfseries\large}}
\def\@seccntformat#1{\csname the#1\endcsname.\ }
\numberwithin{equation}{section} 
\theoremstyle{theorem}
\newtheorem{thm}{Theorem}[section]
\newtheorem{lem}[thm]{Lemma}
\theoremstyle{definition}
\newtheorem{remark}{Remark}[section]
\newtheorem*{prthstab}{Proof of Theorem~\ref{stab}}
\newcommand{\ep}{\varepsilon}
\newcommand{\pa}{\partial}
\newcommand{\R}{\mathbb{R}}
\newcommand{\N}{\mathbb{N}}
\newcommand{\cl}[1]{{\overline#1}}
\newcommand{\baru}{\overline{u_0}}
\begin{document}
\footnote[0]{
    2020{\it Mathematics Subject Classification}\/. 
    Primary: 35B40; 
    Secondary: 35K59, 92C17.
    }
\footnote[0]{
    {\it Key words and phrases}\/:
    chemotaxis; quasilinear; attraction-repulsion; stabilization.
    }
\begin{center} 
    \Large{{\bf 
    Stabilization for small mass \\in a quasilinear 
    parabolic--elliptic--elliptic \\
    attraction-repulsion chemotaxis system 
    with\\ density-dependent 
    sensitivity: \\
    repulsion-dominant case
    }}%
\end{center}
\vspace{5pt}
\begin{center}
    Yutaro Chiyo 
    \footnote[0]{
    E-mail: 
    {\tt ycnewssz@gmail.com} 
    }\\
    \vspace{12pt}
    Department of Mathematics, 
    Tokyo University of Science\\
    1-3, Kagurazaka, Shinjuku-ku, 
    Tokyo 162-8601, Japan\\
    \vspace{2pt}
\end{center}
\begin{center}    
    \small \today
\end{center}

\vspace{2pt}
\newenvironment{summary}
{\vspace{.5\baselineskip}\begin{list}{}{%
     \setlength{\baselineskip}{0.85\baselineskip}
     \setlength{\topsep}{0pt}
     \setlength{\leftmargin}{12mm}
     \setlength{\rightmargin}{12mm}
     \setlength{\listparindent}{0mm}
     \setlength{\itemindent}{\listparindent}
     \setlength{\parsep}{0pt}
     \item\relax}}{\end{list}\vspace{.5\baselineskip}}
\begin{summary}
{\footnotesize {\bf Abstract.} 
This paper deals with the quasilinear 
attraction-repulsion chemotaxis system
%
\begin{align*}
    \begin{cases}
        u_t=\nabla\cdot \big((u+1)^{m-1}\nabla u
            -\chi u(u+1)^{p-2}\nabla v
            +\xi u(u+1)^{q-2}\nabla w\big), 
          \\
        0=\Delta v+\alpha u-\beta v,
          \\
        0=\Delta w+\gamma u-\delta w
    \end{cases}
\end{align*}
%
in a bounded domain $\Omega \subset \mathbb{R}^n$ $(n \in \mathbb{N})$ 
with smooth boundary $\partial \Omega$, 
where $m, p, q \in \R$, $\chi, \xi, \alpha, \beta, \gamma, \delta>0$ 
are constants. 
In the case that $m=1$ and $p=q=2$, 
when $\chi\alpha-\xi\gamma<0$ and $\beta=\delta$, 
Tao--Wang (Math.\ Models Methods Appl.\ Sci.; 2013; 23; 1--36) 
proved that global bounded classical solutions toward 
the spatially constant equilibrium 
$(\overline{u_0}, \frac{\alpha}{\beta}\overline{u_0}, 
\frac{\gamma}{\delta}\overline{u_0})$
via the reduction to the Keller--Segel system 
by using the transformation $z:=\chi v-\xi w$, 
where $\overline{u_0}$ is the spatial average of the initial data $u_0$. 
However, since the above system involves nonlinearities, 
the method is no longer valid. 
The purpose of this paper is to establish that 
global bounded classical solutions 
converge to the spatially constant equilibrium $(\overline{u_0}, \frac{\alpha}{\beta}\overline{u_0}, 
\frac{\gamma}{\delta}\overline{u_0})$. 
}
\end{summary}
\vspace{10pt}

\newpage

\section{Introduction} \label{Sec1}


In this paper we study stabilization that solutions converge to 
a spatially constant equilibrium in 
the quasilinear attraction-repulsion chemotaxis system
%
\begin{align*}
    \begin{cases}
        u_t=\nabla\cdot \big((u+1)^{m-1}\nabla u
            -\chi u(u+1)^{p-2}\nabla v
            +\xi u(u+1)^{q-2}\nabla w\big), 
          \\
        0=\Delta v+\alpha u-\beta v,
          \\
        0=\Delta w+\gamma u-\delta w, 
    \end{cases}
\end{align*}
%
where $m, p, q \in \R$, $\chi, \xi, \alpha, \beta, \gamma, \delta>0$
are constants, 
and the functions $u$, $v$ and $w$ show the cell (or organism) density, 
the concentrations of the chemoattractant and chemorepellent, respectively. 
This system is a generalization of the chemotaxis models 
introduced by Keller--Segel~\cite{KS-1970}, 
where 
$(u+1)^{m-1}$, appearing in the diffusion term, 
means that the cell density increases and so does 
the diffusion rate. 
Also, the quasilinear sensitivities 
$u(u+1)^{p-2}$ and $u(u+1)^{q-2}$, 
which denote the density-dependent probabilities for cells 
to find space somewhere in their neighboring locations, 
were initially proposed by Painter--Hillen~\cite{PH-2002} 
via the approach of assuming the presence of a volume-filling effect and 
were studied by Tao--Winkler~\cite{TW-2012}. 
Here {\it chemotaxis} is the property of cells to move 
in a directional manner in response to concentration gradients 
of chemical substances. 
For instance, in bacteria such as E. coli, 
this property causes them to move toward the chemoattractant 
and away from the chemorepellent. 
One of the models describing such a chemotactic process is 
a fully parabolic attraction-repulsion chemotaxis system 
introduced by
Painter--Hillen~\cite{PH-2002} to idealize the quorum effect 
in the chemotactic process and Luca et al.\ \cite{LCEM-2003} 
to show the aggregation of microglia observed in Alzheimer's disease. 

\medskip

The original Keller--Segel system proposed in \cite{KS-1970} is as follows: 
%
\begin{align*}
    \begin{cases}
        u_t=\nabla \cdot\big(\nabla u
            -\chi u\nabla v\big), 
          \\
        v_t=\Delta v-v+u.
    \end{cases}
\end{align*}
%
After that, many versions have been derived from 
this system (see Hillen--Painter~\cite{HP-2009}) 
and have been 
extensively studied (see e.g., Bellomo et al.\ \cite{BBTW-2015}, 
Arumugam--Tyagi~\cite{AT-2021}). 
In particular, one of which is the following quasilinear version: 
%
\begin{align*}
    \begin{cases}
        u_t=\nabla\cdot \big((u+1)^{m-1}\nabla u
            -\chi u(u+1)^{p-2}\nabla v \big), 
          \\
        v_t=\Delta v-v+u.
    \end{cases}
\end{align*}
%
This system, which was proposed in \cite{PH-2002}, 
has been well investigated. 
For example, finite-time blow-up was shown in 
\cite{CS-2012, CS-2014, W-2013, W-2018}; 
boundedness was proved in 
\cite{ISY-2014, IY-2020, LX-2015, SK-2006, TW-2012}. 
More precisely, Winkler~\cite{W-2013} and 
Cie\'slak--Stinner~\cite{CS-2012, CS-2014} established
finite-time blow-up under the condition $p > m + \frac{2}{n}$, where $n$ denotes the spatial dimension. 
On the other hand, Tao--Winkler~\cite{TW-2012} derived boundedness 
in the above system on a convex domain when $p < m + \frac{2}{n}$; 
after that, the convexity of the domain 
was removed by Ishida--Seki--Yokota~\cite{ISY-2014}. 
Also, Ishida--Yokota~\cite{IY-2020} guaranteed 
existence 
of global bounded weak solutions in
the above system 
on the whole space. 
Besides, in the critical case $p =m + \frac{2}{n}$, 
blow-up and boundedness were classified by the
condition for the initial data $u_0, v_0$ (see e.g., 
\cite{BL-2013, IY-2012, LM-2017, M-2017}). 
As to stabilization in the above quasilinear system, 
in three or more space dimensions 
Winkler~\cite{W-2010-1} proved that solutions converge to 
a constant steady state at an exponential rate as 
$t \to\infty$ under 
smallness conditions
for $\|u_0\|_{L^\sigma(\Omega)}$ 
and $\|\nabla v_0\|_{L^\theta(\Omega)}$ for all $\sigma>\frac{n}{2}$ and all 
$\theta>n$, where $\Omega$ is a bounded domain; after the work, Cao~\cite{Cao-2015} extended the result 
in the critical case that $\sigma=\frac{n}{2}$ and $\theta=n$ 
in two or more space dimensions. 
Also, in two or more space dimensions 
Cie\'slak--Winkler~\cite{CW-2017} showed that 
a global bounded classical solution exists 
and that the solution approaches the spatially constant equilibrium 
$(\baru, \baru)$, where $\baru:=\frac{1}{|\Omega|}\int_\Omega u_0$,  
in two or more space dimensions 
under the conditions that $p-m \in [0, \frac{2}{n})$ 
and that 
$\chi\|u_0\|_{L^1(\Omega)}^{p-m} \le 1/C_{\langle p-m\rangle}^2$, 
where $C_{\langle p-m\rangle}>0$ is a constant appearing 
in the Poincar\'e--Sobolev inequality. 
Some related works for 
the corresponding quasilinear chemotaxis system of
parabolic--elliptic type, 
which shows that comparatively fast diffusion 
of the respective chemical substances, 
can be found in \cite{N-1995, N-2001, TW-2007}. 
Also, Ishida~\cite{I-2013} gave the $L^\infty$-decay property 
in the super-critical case with small initial data. 
Moreover, Mizukami~\cite{Mizu-2018, Mizu-2019} 
built bridge 
between the parabolic--parabolic Keller--Segel system and 
the parabolic--elliptic version. 
\medskip

\noindent
{\bf A question in stabilization.}
%
Let us start with a review of known results on stabilization in  
the attraction-repulsion chemotaxis system
%
\begin{align*}
    \begin{cases}
        u_t=\nabla\cdot \big(\nabla u
            -\chi u\nabla v
            +\xi u\nabla w\big), 
          \\
        0=\Delta v+\alpha u-\beta v,
          \\
        0=\Delta w+\gamma u-\delta w. 
    \end{cases}
\end{align*}
In the literatures \cite{LLM-2015, LMW-2015, TW-2013}, 
it was established that global bounded classical solutions 
approach a spatially homogeneous steady state 
by the reduction to the Keller--Segel system via 
the transformation $z := \chi v-\xi w$. 
More precisely, Tao--Wang~\cite{TW-2013} obtained that 
the solution towards the spatially constant equilibrium 
$(\baru, \frac{\alpha}{\beta}\baru, \frac{\gamma}{\delta}\baru)$
under the conditions that $\chi\alpha-\xi\gamma<0$ and that $\beta=\delta$. 
Thereafter, Li et al.~\cite{LLM-2015} and Lin et al.~\cite{LMW-2015} 
derived asymptotic behavior 
by supposing some smallness condition for $u_0$ 
instead of the condition $\beta=\delta$ 
in the parabolic--elliptic--elliptic case (\cite{LLM-2015}) 
and the parabolic--parabolic--parabolic case (\cite{LMW-2015}). 
However, for the system in which the first equation of the above one 
is replaced by 
\begin{align*}
u_t=\nabla\cdot \big((u+1)^{m-1}\nabla u
            -\chi u(u+1)^{p-2}\nabla v
            +\xi u(u+1)^{q-2}\nabla w\big), 
\end{align*}
where $m, p, q \in \R$, 
due to the quasilinear structure of 
nonlinearities, 
it is no longer valid to use the transformation $z := \chi v-\xi w$. 
Meanwhile, 
global existence and boundedness of solutions 
have already been 
shown 
in the parabolic--elliptic--elliptic case
under the condition that $p<q$, or $p=q$ 
and $\chi\alpha-\xi\gamma<0$ (\cite{CY-submitted}). 
However, the following question remains: 
\begin{center}
{\it Does the global bounded classical solution converge to\\
a spatially homogeneous steady state?}
\end{center}
The principal purpose of the present paper is to provide an answer 
to this question. 
\medskip

\noindent
{\bf Main result.} 
To achieve the aforementioned purpose, we concentrate on the quasilinear 
parabolic--elliptic--elliptic attraction-repulsion chemotaxis system
%
\begin{align}\label{ARP}
    \begin{cases}
        u_t=\nabla\cdot \big((u+1)^{m-1}\nabla u
            -\chi u(u+1)^{p-2}\nabla v
            +\xi u(u+1)^{q-2}\nabla w\big), 
          \\
        0=\Delta v+\alpha u-\beta v,
          \\
        0=\Delta w+\gamma u-\delta w,
          \\
         \nabla u \cdot \nu|_{\pa \Omega}
        =\nabla v \cdot \nu|_{\pa \Omega}
        =\nabla w \cdot \nu|_{\pa \Omega}=0,
          \\
        u(\cdot, 0)=u_0
    \end{cases}
\end{align}
%
in a bounded domain $\Omega \subset \R^n$ $(n \in \N)$ 
with smooth boundary $\pa\Omega$, 
where $m, p, q \in \R$, $\chi, \xi, \alpha, \beta, \gamma, \delta>0$ 
are constants, 
$\nu$ is the outward normal vector to $\pa\Omega$, 
%
\begin{align}\label{initial}
              u_0 \in C^0(\cl{\Omega}), 
    \quad 
              u_0 \ge 0\ \,{\rm in}\ \cl{\Omega}
    \quad 
              {\rm and}
    \quad 
              u_0 \neq 0.
\end{align}

\medskip
\noindent
The main result of this paper reads as follows. 
%
\begin{thm}\label{stab}
Let $n \in \N$.
Assume that $p, q, \chi, \xi, \alpha, \gamma$ satisfy either 
%
\begin{align}\label{condi1}
   p<q
\end{align}
%
or
%
\begin{align}\label{condi2}
              p=q
   \quad 
              {\it and}
   \quad
              \chi\alpha-\xi\gamma<0, 
\end{align}
%
and that $m, p$ fulfill 
%
\begin{align}\label{condi3}
                   p-m \in [0, 1]
        \ \,\mbox{when}\ 
                   n=1,
        \quad  p-m \in \Big[0, \frac2n\Big]
        \ \,\mbox{when}\ 
                   n \ge 2.
\end{align}
%
Suppose that $u_0$ satisfies \eqref{initial} and 
%
\begin{align}\label{condi4}
         \chi\alpha\|u_0\|_{L^1(\Omega)}^{p-m} 
    \le \frac{1}{2C_{\langle p-m\rangle}},
\end{align}
%
where $C_{\langle p-m\rangle}>0$ is a constant appearing 
in {\rm Lemma~\ref{PSlemma}}. 
Then the solution $(u, v, w)$ of the problem~\eqref{ARP} fulfills 
%
\begin{alignat}{3}
                 &u(\cdot, t) \to \baru 
                 &\ \, {\it in}\ L^\infty(\Omega) 
                 &\quad {\it as}\ t \to \infty  \label{stab u} \\
\intertext{and}
                 &v(\cdot, t) \to \frac{\alpha}{\beta}\baru
                 &\ \, {\it in}\ L^\infty(\Omega)
                 &\quad {\it as}\ t \to \infty  \label{stab v} \\
\intertext{as well as}
                 &w(\cdot, t) \to \frac{\gamma}{\delta}\baru
                 &\ \, {\it in}\ L^\infty(\Omega)
                 &\quad {\it as}\ t \to \infty,  \label{stab w}
\end{alignat}
%
where $\baru := \frac{1}{|\Omega|}\int_\Omega u_0$. 
\end{thm}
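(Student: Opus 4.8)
The plan is to follow the now-standard ``energy-type'' approach of Cie\'slak--Winkler for the single-species quasilinear Keller--Segel system, adapted to the attraction-repulsion setting. The global bounded classical solution $(u,v,w)$ is provided by \cite{CY-submitted}, so the whole task is to prove convergence. First I would establish a \emph{quasi-energy inequality}: testing the first equation by a suitable power of $u$ (roughly $(u+1)^{p-m}$ or a primitive thereof) and using the two elliptic equations to replace $\Delta v$ and $\Delta w$ by $\beta v-\alpha u$ and $\delta w-\gamma u$, one obtains the differential inequality for a functional
\begin{align*}
    E(t):=\int_\Omega \big((u+1)^{p-m+1}-\text{(linear correction)}\big)
\end{align*}
of the form $\frac{d}{dt}E(t)\le -\kappa\,D(t)$ with a nonnegative dissipation $D(t)$ controlling $\int_\Omega |\nabla (u+1)^{(p-m+1)/2}|^2$ (plus, via elliptic regularity, norms of $\nabla v$ and $\nabla w$). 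The repulsion-dominant hypothesis \eqref{condi1} or \eqref{condi2} is what makes the cross term coming from $\xi u(u+1)^{q-2}\nabla w$ cooperate with, rather than fight against, the dissipation; when $p<q$ the extra power of $(u+1)$ in the repulsion sensitivity has to be absorbed using the boundedness of $u$ already in hand.

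The second ingredient is the \textbf{Poincar\'e--Sobolev inequality of Lemma~\ref{PSlemma}}: it lets one bound the ``bad'' part of $D(t)$ (the attraction term $\chi\alpha\int_\Omega$-type contribution) by $\chi\alpha\|u_0\|_{L^1(\Omega)}^{p-m}\,C_{\langle p-m\rangle}$ times the ``good'' gradient part, because mass $\int_\Omega u$ is conserved and equals $\|u_0\|_{L^1(\Omega)}$. The smallness condition \eqref{condi4} then guarantees that this constant is at most $\tfrac12$, so at least half of the dissipation survives; the restriction \eqref{condi3} on $p-m$ is exactly what is needed for the relevant Gagliardo--Nirenberg/Poincar\'e--Sobolev embedding to hold in dimension $n$. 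Integrating the resulting inequality $\frac{d}{dt}E(t)\le -\tfrac{\kappa}{2}\int_\Omega|\nabla (u+1)^{(p-m+1)/2}|^2$ in time, and noting $E$ is bounded below, yields $\int_0^\infty\!\int_\Omega|\nabla (u+1)^{(p-m+1)/2}|^2\,dt<\infty$, whence (using uniform H\"older bounds on $u$ from the boundedness theory, together with uniform continuity of $t\mapsto\int_\Omega|\nabla(\cdot)|^2$) one concludes $\nabla u(\cdot,t)\to 0$ and then, via the Poincar\'e inequality and mass conservation, $u(\cdot,t)\to\baru$ in $L^2(\Omega)$.

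To upgrade $L^2$-convergence to $L^\infty$-convergence as in \eqref{stab u} I would interpolate: uniform-in-time bounds in $C^{\theta}(\cl\Omega)$ for some $\theta>0$ (available from the global boundedness result of \cite{CY-submitted} and parabolic regularity) combine with $u(\cdot,t)\to\baru$ in $L^2(\Omega)$ through a Gagliardo--Nirenberg interpolation to give $u(\cdot,t)\to\baru$ uniformly. Once \eqref{stab u} is known, \eqref{stab v} and \eqref{stab w} follow immediately: subtracting the elliptic equations satisfied by the constants $\tfrac{\alpha}{\beta}\baru$ and $\tfrac{\gamma}{\delta}\baru$, the differences $v-\tfrac{\alpha}{\beta}\baru$ and $w-\tfrac{\gamma}{\delta}\baru$ solve $-\Delta(\cdot)+\beta(\cdot)=\alpha(u-\baru)$ (resp. with $\delta,\gamma$), and elliptic $L^\infty$-estimates give $\|v(\cdot,t)-\tfrac{\alpha}{\beta}\baru\|_{L^\infty(\Omega)}\le C\|u(\cdot,t)-\baru\|_{L^\infty(\Omega)}\to 0$, and similarly for $w$.

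The main obstacle I anticipate is the construction of the quasi-energy functional and the bookkeeping of all the nonlinear terms so that the structural conditions \eqref{condi1}--\eqref{condi3} are used sharply: one must choose the test function exponent correctly, handle the mismatch between the diffusion exponent $m$ and the two sensitivity exponents $p,q$, and in the case $p<q$ absorb the surplus factor of $(u+1)^{q-p}$ in the repulsive flux without destroying the sign structure — it is precisely here that repulsion-dominance is exploited and where a naive estimate would fail. A secondary technical point is ensuring the dissipation term genuinely dominates, after application of Lemma~\ref{PSlemma}, uniformly in time, which is where the conserved mass $\|u_0\|_{L^1(\Omega)}$ and the smallness assumption \eqref{condi4} enter; and finally, promoting the time-integrability of the gradient to pointwise-in-time decay requires a standard but careful uniform-continuity argument on the dissipation functional.
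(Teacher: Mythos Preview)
Your high-level plan --- energy inequality, Poincar\'e--Sobolev via Lemma~\ref{PSlemma}, smallness of mass, time-integrability of a dissipation, a Barbalat-type decay argument, Gagliardo--Nirenberg interpolation to $L^\infty$, and finally elliptic estimates for $v$ and $w$ --- is exactly the architecture of the paper. Two specifics differ, however, and the second is a real gap for recovering the statement as written. The paper does not test with a power of $u$: it uses $\Phi(s)=\int_1^s\int_1^\sigma\frac{d\eta\,d\sigma}{\eta(\eta+1)^{p-2}}$, chosen so that $\Phi''(u)\cdot u(u+1)^{p-2}=1$ and the attraction contribution collapses to exactly $\chi\int_\Omega\nabla u\cdot\nabla v$, which the elliptic $v$-equation (Lemmas~\ref{tech} and~\ref{L2est V, W}) converts into $\le 2\chi\alpha\int_\Omega(u-\baru)^2$. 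The diffusion term is then bounded below by $\frac{1}{C_{\langle p-m\rangle}\|u_0\|_{L^1}^{p-m}}\int_\Omega(u-\baru)^2$ via Lemma~\ref{PSlemma} and mass conservation (Lemma~\ref{Est Diff}). So the dissipation is $\int_\Omega(u-\baru)^2$ from the outset, and there is no intermediate ``$\nabla u\to 0$'' step.

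The genuine gap is your treatment of the repulsion term. The paper does \emph{not} absorb the factor $(u+1)^{q-p}$ using the $L^\infty$ bound on $u$. With the above $\Phi$, the repulsion contribution reads $I=-\xi\int_\Omega(u+1)^{q-p}\nabla u\cdot\nabla w$, and Lemma~\ref{DI int phi} shows $I\le 0$ outright: integrate by parts, substitute $\Delta w=\delta w-\gamma u$, rewrite everything in terms of $u+1$ and $w+\gamma/\delta$, invoke the elliptic bound $\|w+\gamma/\delta\|_{L^{q-p+2}}\le\frac{\gamma}{\delta}\|u+1\|_{L^{q-p+2}}$ for $-\Delta+\delta$, and apply H\"older (valid since $q\ge p$). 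This costs nothing --- no constants involving $\xi,\gamma,\delta$ or $\|u\|_{L^\infty}$ enter the energy inequality --- and that is precisely why the smallness condition is exactly \eqref{condi4}, involving only $\chi\alpha$. Your route of bounding the repulsion cross term via the known $L^\infty$ bound would leave a positive remainder that must also be dominated by the diffusion; the resulting threshold would depend on $\xi,\gamma,\delta$ and on the a~priori unknown constant in the boundedness result, and would be strictly more restrictive than \eqref{condi4}. In short, the missing idea is the sign estimate $I\le 0$ for the repulsion term; as the Remark after Theorem~\ref{stab} makes explicit, hypotheses \eqref{condi1}--\eqref{condi2} are used only to invoke global boundedness from \cite{CY-submitted}, not in the energy argument itself.
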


\begin{remark}
The conditions \eqref{condi1} and \eqref{condi2} in Theorem~\ref{stab} 
are imposed only to guarantee boundedness. 
In other words, the same conclusion holds by assuming only the conditions 
\eqref{condi3} and \eqref{condi4} when boundedness is known. 
\end{remark}

\noindent
{\bf Strategy and plan of the paper.} 
The strategy for proving stabilization is to construct the energy inequality  
%
\begin{align}\label{energy}
          \frac{d}{dt}\int_\Omega \Phi(u)
          +\ep_0\int_\Omega (u-\baru)^2
     \le 0
\end{align}
%
for some $\ep_0>0$ and some nonnegative functional $\Phi$. 
The key to the derivation of this inequality is to estimate
the diffusion and attraction terms by $\int_\Omega (u-\baru)^2$. 
Specifically, we estimate the former by employing
the Poincar\'e--Sobolev inequality. 
Also, as to the latter, we handle it by taking advantage of 
a favorable structure of the second equation in \eqref{ARP}. 
Thereafter, using a smallness condition for $u_0$, 
we arrive to \eqref{energy}. 

This paper is organized as follows. 
In Section~\ref{Sec2} we state the Poincar\'e--Sobolev inequality 
which will be employed for a term derived from the diffusion one. 
In addition, we give some property of a uniformly continuous function. 
Section~\ref{Sec3} is devoted to the proof of stabilization 
in the problem \eqref{ARP}. 


\section{Preliminaries} \label{Sec2}

In this section we collect two basic facts which will be used later. 
We first recall the Poincar\'e--Sobolev inequality 
which is proved based on 
the Sobolev embedding theorem 
and the Poincar\'e--Wirtinger inequality. 

\begin{lem}\label{PSlemma}
Let $n \in \N$. 
Assume that $\theta \in (-1, 1]$ when $n=1$ and 
that $\theta \in (-1, \frac{2}{n}]$ when $n \ge 2$. 
Then there exists $C_{\langle \theta\rangle}>0$ such that 
%
\begin{align}\label{PS}
               \|\varphi-\cl{\varphi}\|_{L^2(\Omega)}
          \le C_{\langle \theta \rangle}
               \|\nabla\varphi\|_{L^{\frac{2}{\theta+1}}(\Omega)}
\end{align}
%
for all $\varphi \in W^{1, \frac{2}{\theta+1}}(\Omega)$, 
where $\cl{\varphi} := \frac{1}{|\Omega|}\int_\Omega \varphi$. 
\end{lem}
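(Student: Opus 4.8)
The plan is to reduce \eqref{PS} to the two classical ingredients already advertised before the statement: the Sobolev embedding theorem and the Poincar\'e--Wirtinger inequality. Throughout, write $r := \frac{2}{\theta+1}$, so that the hypothesis on $\theta$ becomes $r \in [1, \infty)$ when $n=1$ and $r \in [\frac{2n}{n+2}, \infty)$ when $n \ge 2$. Since $\frac{2n}{n+2} \ge 1$ exactly when $n \ge 2$, we have $r \ge 1$ in all cases, so that $W^{1, r}(\Omega)$ is well defined and the spaces $L^2(\Omega)$ and $L^r(\Omega)$ appearing in \eqref{PS} are the relevant ones.

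The first step is to record the continuous embedding $W^{1, r}(\Omega) \hookrightarrow L^2(\Omega)$, and this is exactly where the admissible range of $\theta$ is used: it is chosen so that the Sobolev exponent reaches $2$. Indeed, in the regime $r < n$ the Sobolev conjugate is $r^* = \frac{nr}{n-r}$, and the elementary equivalence $r^* \ge 2 \Leftrightarrow r \ge \frac{2n}{n+2}$ shows that, under the stated bound, $r^* \ge 2$; hence $W^{1, r}(\Omega) \hookrightarrow L^{r^*}(\Omega) \hookrightarrow L^2(\Omega)$, the last inclusion holding because $\Omega$ is bounded. In the complementary regime $r \ge n$ (which in particular covers the whole one-dimensional case, where $r \ge 1 = n$) the embedding into $L^2(\Omega)$ is even more immediate, since $W^{1, r}(\Omega)$ embeds into every $L^s(\Omega)$ with $s < \infty$ (and into $C^0(\cl{\Omega})$ when $r > n$). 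Either way one obtains a constant $C_S > 0$ with $\|\psi\|_{L^2(\Omega)} \le C_S(\|\psi\|_{L^r(\Omega)} + \|\nabla \psi\|_{L^r(\Omega)})$ for every $\psi \in W^{1, r}(\Omega)$.

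Applying this with $\psi := \varphi - \cl{\varphi}$ completes the argument. Because $\cl{\varphi}$ is constant we have $\nabla \psi = \nabla \varphi$, while the mean of $\psi$ vanishes, so the Poincar\'e--Wirtinger inequality on the bounded connected smooth domain $\Omega$ yields a constant $C_P > 0$ with $\|\varphi - \cl{\varphi}\|_{L^r(\Omega)} \le C_P \|\nabla \varphi\|_{L^r(\Omega)}$. Feeding these two facts into the embedding estimate gives
\[
\|\varphi - \cl{\varphi}\|_{L^2(\Omega)}
\le C_S\big(\|\varphi - \cl{\varphi}\|_{L^r(\Omega)} + \|\nabla \varphi\|_{L^r(\Omega)}\big)
\le C_S(C_P + 1)\|\nabla \varphi\|_{L^r(\Omega)},
\]
so that \eqref{PS} holds with $C_{\langle \theta \rangle} := C_S(C_P + 1)$.

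I expect the only genuinely delicate point to be the endpoint $\theta = \frac{2}{n}$ (for $n \ge 2$), where $r = \frac{2n}{n+2}$ and $r^* = 2$ exactly: there one must invoke the critical Sobolev embedding $W^{1, r}(\Omega) \hookrightarrow L^{r^*}(\Omega)$ rather than a strictly subcritical one, and similarly the borderline case $r = 1$ in dimension $n = 1$ relies on the (valid, on an interval) embedding $W^{1, 1} \hookrightarrow L^\infty$. Away from these endpoints the inequalities are strictly subcritical and entirely routine; the role of the range restriction on $\theta$ is precisely to keep the Sobolev exponent from dropping below $2$, which is the single condition on which the whole proof turns.
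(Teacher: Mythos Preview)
Your proof is correct and follows essentially the same route as the paper's: set $r=\sigma=\frac{2}{\theta+1}$, use the Sobolev embedding $W^{1,r}(\Omega)\hookrightarrow L^2(\Omega)$ (guaranteed by the range of $\theta$), and then absorb the lower-order term via the Poincar\'e--Wirtinger inequality. The only cosmetic difference is that you split the embedding into the cases $r<n$ and $r\ge n$ and comment on the endpoints, whereas the paper invokes the Sobolev embedding theorem in one line.
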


\begin{proof}
We set $\sigma := \frac{2}{\theta+1}>0$ for $\theta>-1$. 
By assumption, we have $\sigma \in [1, \infty)$ when $n=1$. 
Also, in the case $n \ge 2$ we observe from 
the condition $\theta\le \frac{2}{n}$ that $\sigma^* \ge 2$; 
note that $\sigma^*$ is defined as 
$
\frac{1}{\sigma^*}
=\frac{1}{\sigma}-\frac{1}{n}
=\frac{\theta+1}{2}-\frac{1}{n} 
\ \big(\le \frac{1}{2}\big).
$
Therefore we see from the Sobolev embedding theorem that 
there exists $c_1>0$ such that 
\begin{align*}
        \|\varphi-\cl{\varphi}\|_{L^2(\Omega)} 
&\le c_1\|\varphi-\cl{\varphi}\|_{W^{1, \sigma}(\Omega)} \\
&\le c_1\big(\|\varphi-\cl{\varphi}\|_{L^{\sigma}(\Omega)}
                    +\|\nabla \varphi\|_{L^{\sigma}(\Omega)}\big)
\end{align*}
for all $\varphi \in W^{1, \sigma}(\Omega)$. 
Moreover, we can employ the Poincar\'e--Wirtinger inequality
\begin{align*}
\|\varphi-\cl{\varphi}\|_{L^\sigma(\Omega)} \le 
c_2\|\nabla \varphi\|_{L^\sigma(\Omega)}
\end{align*}
for all $\varphi \in W^{1, \sigma}(\Omega)$ with some $c_2>0$, 
because $\sigma \in [1, \infty)$ is assured by the condition 
$\theta \le 1$ in the case $n=1$ and $\theta \le \frac{2}{n} \le 1$ 
in the case $n \ge 2$.  
Combining the above inequalities, we arrive to the conclusion. 
\end{proof}

We next give the following lemma which is called Barbalat's lemma. 
For the proof, see \cite[Lemma~4.2]{SL-1991}, for instance. 
%
\begin{lem}\label{uni}
Assume that $f \colon [0, \infty) \to \R$ is a uniformly continuous 
nonnegative function satisfying 
\begin{align*}
         \int_0^\infty f(t)\,dt < \infty.
\end{align*} 
Then 
\begin{align*}
              \lim_{t \to \infty} f(t)=0. 
\end{align*}
\end{lem}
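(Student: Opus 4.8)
The plan is to argue by contradiction, exploiting the tension between uniform continuity (which prevents $f$ from dropping abruptly) and convergence of the integral (which prevents $f$ from remaining large on a set of infinite total length). Suppose, contrary to the assertion, that $f(t)$ does not tend to $0$ as $t\to\infty$. Since $f\ge 0$, this failure means there exist some $\ep>0$ and a sequence $t_k\to\infty$ with $f(t_k)\ge\ep$ for every $k\in\N$. The first thing I would do is thin this sequence: passing to a subsequence if necessary, I arrange that consecutive points are well separated, say $t_{k+1}>t_k+1$, so that the short intervals attached to the $t_k$ below will be pairwise disjoint.

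The next step converts the pointwise bounds $f(t_k)\ge\ep$ into lower bounds on whole intervals, and this is precisely where uniform continuity is used. By uniform continuity I can choose $\delta\in(0,1)$ such that $|f(s)-f(t)|<\tfrac{\ep}{2}$ whenever $|s-t|\le\delta$. Then for each $k$ and each $s\in[t_k,t_k+\delta]$ one has $f(s)\ge f(t_k)-\tfrac{\ep}{2}\ge\tfrac{\ep}{2}$, whence
\[
\int_{t_k}^{t_k+\delta} f(s)\,ds\ge\frac{\ep\delta}{2}.
\]
Because $t_{k+1}>t_k+1>t_k+\delta$, the intervals $[t_k,t_k+\delta]$ are mutually disjoint and all lie in $[0,\infty)$.

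Finally, using $f\ge 0$ together with the disjointness of these intervals, I would bound the integral of $f$ from below by the sum of the contributions just obtained: for every $K\in\N$,
\[
\int_0^{\infty} f(s)\,ds\ge\sum_{k=1}^{K}\int_{t_k}^{t_k+\delta} f(s)\,ds\ge K\cdot\frac{\ep\delta}{2}.
\]
Letting $K\to\infty$ forces $\int_0^\infty f(s)\,ds=\infty$, contradicting the hypothesis that this integral is finite. Hence the assumption is untenable and $\lim_{t\to\infty}f(t)=0$. The argument is elementary; the only point that requires genuine care is the bookkeeping ensuring that the intervals $[t_k,t_k+\delta]$ are truly disjoint, which I handle by thinning the sequence $(t_k)$ in advance so that consecutive points differ by more than $\delta$. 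The nonnegativity hypothesis is a mild convenience here, since it lets me work directly with $f$ rather than $|f|$ in extracting the sequence and in the final summation.
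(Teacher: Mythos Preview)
Your proof is correct and is the standard contradiction argument for Barbalat's lemma. The paper itself does not supply a proof at all---it simply cites \cite[Lemma~4.2]{SL-1991}---so your self-contained argument actually goes beyond what the paper provides. The reasoning is sound: the thinning step to ensure $t_{k+1}>t_k+1$ cleanly guarantees disjointness of the intervals $[t_k,t_k+\delta]$ once $\delta<1$, and the uniform continuity bound together with nonnegativity yields the divergent lower bound on $\int_0^\infty f$.
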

%


\section{Stabilization}
\label{Sec3}

In this section we assume that $p, q, \chi, \xi, \alpha, \gamma$
fulfill either \eqref{condi1} or \eqref{condi2}, 
and that $u_0$ satisfies \eqref{initial}. 
Then we denote by $(u, v, w)$  the global classical solution 
of the problem~\eqref{ARP} given in \cite{CY-submitted}. 
\medskip

The goal of this section is to deduce stabilization in the problem~\eqref{ARP}. 
To this end, we will establish the key inequality \eqref{DI Phi}, 
which, as will be shown in Lemma~\ref{int u-u_0 finite} below, 
leads to the energy inequality
\begin{align*}
          \frac{d}{dt}\int_\Omega \Phi(u)
          +\ep_0\int_\Omega (u-\baru)^2
     \le 0, 
\end{align*} 
where a constant $\ep_0>0$ and a functional $\Phi$ will be given in later. 
In order to construct the key inequality, we define the functions 
%
\begin{align}
                 V(x,t) &:= v(x,t)-\frac{\alpha}{\beta}\baru,
       \quad 
                 x \in \Omega,\ t>0, \label{Vdef}
\intertext{and}
                 W(x,t) &:= w(x,t)-\frac{\gamma}{\delta}\baru, 
       \quad 
                  x \in \Omega,\ t>0. \label{Wdef}
\end{align}
%
We here note from the second and third equations in \eqref{ARP} that 
$V$ and $W$ satisfy
%
\begin{align}
                 0&=\Delta V+\alpha (u-\baru)-\beta V,  \label{Veq}
\intertext{and}
                 0&=\Delta W+\gamma (u-\baru)-\delta W.  \label{Weq}
\end{align}
%
We first derive some identity to prove 
the key inequality. 
%
\begin{lem}\label{tech}
Let $V$ be the function defined in \eqref{Vdef}. 
Then the following identity holds\/{\rm :}
%
\begin{align}\label{uvV}
                 0=\int_\Omega \nabla u(\cdot, t) \cdot \nabla v(\cdot, t)
                     -\alpha\int_\Omega (u(\cdot, t)-\baru)^2
                     +\beta\int_\Omega (u(\cdot, t)-\baru)V(\cdot, t)
\end{align}
%
for all $t>0$. 
\end{lem}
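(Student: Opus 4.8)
The plan is to derive \eqref{uvV} by testing the second equation in \eqref{ARP} (equivalently, \eqref{Veq}) against the function $u-\baru$ and integrating by parts over $\Omega$. Concretely, I would start from \eqref{Veq}, multiply by $u(\cdot,t)-\baru$, and integrate over $\Omega$, obtaining
\[
0=\int_\Omega (u-\baru)\Delta V+\alpha\int_\Omega (u-\baru)^2-\beta\int_\Omega (u-\baru)V.
\]
The only nontrivial term is the first one, which I would rewrite using Green's formula. Since $\nabla V=\nabla v$ (the constant $\frac{\alpha}{\beta}\baru$ drops under the gradient) and the Neumann boundary condition $\nabla v\cdot\nu|_{\pa\Omega}=0$ gives $\nabla V\cdot\nu|_{\pa\Omega}=0$, integration by parts yields
\[
\int_\Omega (u-\baru)\Delta V=-\int_\Omega \nabla(u-\baru)\cdot\nabla V=-\int_\Omega \nabla u\cdot\nabla v,
\]
where in the last equality I use that $\nabla(u-\baru)=\nabla u$ (as $\baru$ is constant in $x$) and $\nabla V=\nabla v$. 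Substituting this back and rearranging the signs gives exactly \eqref{uvV}.

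The main point requiring a little care is the regularity/justification of the integration by parts: one needs $u(\cdot,t),V(\cdot,t)$ smooth enough in $x$ for Green's formula, which is guaranteed because $(u,v,w)$ is the global \emph{classical} solution from \cite{CY-submitted} (so $u(\cdot,t)\in C^1(\cl\Omega)$ and, by elliptic regularity applied to the second equation, $v(\cdot,t)\in C^2(\cl\Omega)$, hence so is $V$), and because the boundary integral $\int_{\pa\Omega}(u-\baru)\,\nabla V\cdot\nu$ vanishes identically by the no-flux condition. I do not expect any genuine obstacle here; the identity is essentially a bookkeeping step that isolates the cross term $\int_\Omega\nabla u\cdot\nabla v$ — which will later appear from the attraction term in the energy computation — in terms of the more tractable quantities $\int_\Omega(u-\baru)^2$ and $\int_\Omega(u-\baru)V$, the latter of which can in turn be controlled via \eqref{Veq}.
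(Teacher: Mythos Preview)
Your proposal is correct and follows essentially the same route as the paper: multiply \eqref{Veq} by $u-\baru$, integrate over $\Omega$, and use integration by parts together with $\nabla V=\nabla v$ and $\nabla(u-\baru)=\nabla u$ to rewrite $\int_\Omega (u-\baru)\Delta V$ as $-\int_\Omega \nabla u\cdot\nabla v$. The only difference is that you spell out the boundary term and the regularity justification more explicitly than the paper does.
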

%

\begin{proof}
Multiplying \eqref{Veq} by $u-\baru$ and integrating it over $\Omega$, we obtain 
%
\begin{align}\label{Veq1}
         0&=\int_\Omega (u-\baru)\Delta V
                +\alpha\int_\Omega (u-\baru)^2
                -\beta\int_\Omega (u-\baru)V.
\end{align}
%
Also, integration by parts entails
%
\begin{align*}
               \int_\Omega (u-\baru)\Delta V
         &= -\int_\Omega \nabla u \cdot \nabla v, 
\end{align*}
%
where we used the identity $\nabla V=\nabla v$. 
This together with \eqref{Veq1} yields \eqref{uvV}. 
\end{proof}

We next establish the relation between 
$\int_\Omega V^2$ and $\int_\Omega (u-\baru)^2$, 
and the one between $\int_\Omega W^2$ and $\int_\Omega (u-\baru)^2$. 
They will play an important role to obtain \eqref{stab v} and \eqref{stab w}. 
%
\begin{lem}\label{L2est V, W}
The functions $V$ and $W$ defined in \eqref{Vdef} and \eqref{Wdef} satisfy
%
\begin{align}
     \int_\Omega V^2(\cdot, t) \le \frac{\alpha^2}{\beta^2}\int_\Omega (u(\cdot, t)-\baru)^2 \label{VL2} 
     \intertext{and}
     \int_\Omega W^2(\cdot, t) \le \frac{\gamma^2}{\delta^2}\int_\Omega (u(\cdot, t)-\baru)^2  \label{WL2}
\end{align}
%
for all $t>0$.
\end{lem}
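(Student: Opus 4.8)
The plan is to test the elliptic equation~\eqref{Veq} against $V$ itself, rather than against $u-\baru$ as in Lemma~\ref{tech}. Multiplying~\eqref{Veq} by $V$ and integrating over $\Omega$, integration by parts together with the homogeneous Neumann condition $\nabla v\cdot\nu|_{\pa\Omega}=0$ (hence $\nabla V\cdot\nu|_{\pa\Omega}=0$, since $V$ and $v$ differ by a constant) gives
\begin{align*}
     0=-\int_\Omega |\nabla V|^2
          +\alpha\int_\Omega (u-\baru)V
          -\beta\int_\Omega V^2,
\end{align*}
so that
\begin{align*}
     \beta\int_\Omega V^2
     +\int_\Omega |\nabla V|^2
     =\alpha\int_\Omega (u-\baru)V.
\end{align*}
Dropping the nonnegative term $\int_\Omega |\nabla V|^2$ and estimating the right-hand side by the Cauchy--Schwarz inequality and then Young's inequality in the form $\alpha ab\le \frac{\beta}{2}b^2+\frac{\alpha^2}{2\beta}a^2$ with $a=\|u-\baru\|_{L^2(\Omega)}$ and $b=\|V\|_{L^2(\Omega)}$, one obtains
\begin{align*}
     \beta\int_\Omega V^2
     \le \frac{\beta}{2}\int_\Omega V^2
         +\frac{\alpha^2}{2\beta}\int_\Omega (u-\baru)^2,
\end{align*}
and absorbing the first term on the right into the left yields~\eqref{VL2}. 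The estimate~\eqref{WL2} for $W$ is obtained in exactly the same way, testing~\eqref{Weq} against $W$ and using $\gamma,\delta$ in place of $\alpha,\beta$.

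I do not expect a genuine obstacle here; the only point requiring a little care is the boundary term in the integration by parts, which vanishes because $\nabla V=\nabla v$ and $v$ satisfies the Neumann condition, and the bookkeeping in the Young inequality so that the coefficient of the absorbed term is strictly less than $\beta$. One could alternatively avoid Young's inequality altogether: from $\beta\int_\Omega V^2\le\alpha\int_\Omega(u-\baru)V\le\alpha\|u-\baru\|_{L^2(\Omega)}\|V\|_{L^2(\Omega)}$ one divides by $\|V\|_{L^2(\Omega)}$ (the case $V\equiv0$ being trivial) to get $\|V\|_{L^2(\Omega)}\le\frac{\alpha}{\beta}\|u-\baru\|_{L^2(\Omega)}$, which is~\eqref{VL2} after squaring; this is arguably the cleanest route and I would present it that way.
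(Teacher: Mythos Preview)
Your proof is correct and follows essentially the same route as the paper: test \eqref{Veq} against $V$, drop the gradient term, and apply Young's inequality with the weights $\frac{\alpha}{2\beta}$ and $\frac{\beta}{2\alpha}$ to absorb $\frac{\beta}{2}\int_\Omega V^2$. The paper applies Young directly to $\int_\Omega (u-\baru)V$ rather than first passing through Cauchy--Schwarz, but this is a cosmetic difference; your alternative argument via dividing by $\|V\|_{L^2(\Omega)}$ is a nice streamlining not present in the paper.
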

%

\begin{proof}
Multiplying \eqref{Veq} by $V$ and integrating it over $\Omega$, we see that 
%
\begin{align*}
         0&=\int_\Omega V\Delta V
                +\alpha\int_\Omega (u-\baru)V
                -\beta\int_\Omega V^2 \notag \\
           &=-\int_\Omega |\nabla V|^2
                +\alpha\int_\Omega (u-\baru)V
                -\beta\int_\Omega V^2, 
\end{align*}
%
which implies
%
\begin{align}\label{intV}
               \beta \int_\Omega V^2 \le \alpha\int_\Omega (u-\baru)V. 
\end{align}
%
Here the term on the right-hand side can be estimated 
by Young's inequality as follows:
%
\begin{align*}
             \int_\Omega (u-\baru)V
        \le \frac{\alpha}{2\beta}\int_\Omega (u-\baru)^2
             +\frac{\beta}{2\alpha}\int_\Omega V^2, 
\end{align*}
%
which along with \eqref{intV} ensures that
%
\begin{align*}
                  \beta \int_\Omega V^2 
             \le \frac{\alpha^2}{2\beta}\int_\Omega (u-\baru)^2
             +\frac{\beta}{2}\int_\Omega V^2, 
\end{align*}
%
that is, \eqref{VL2} holds. 
Similarly, we can derive 
\eqref{WL2} via the relation \eqref{Weq}.
\end{proof}

In order to show stabilization in the problem~\eqref{ARP} 
we introduce the function 
%
\begin{align*}
       \Phi(s) := \int_1^s\int_1^\sigma \frac{1}{\eta(\eta+1)^{p-2}}\,d\eta d\sigma, 
    \quad 
       s \ge 0,
\end{align*}
%
where $p \in \R$ is a constant appearing in the attraction term in \eqref{ARP}. 
In the following lemma we establish the desired key inequality. 
%
\begin{lem}\label{DI int phi}
The first component $u$ of the solution to \eqref{ARP} 
satisfies that 
%
\begin{align}\label{DI Phi}
           \frac{d}{dt}\int_\Omega \Phi(u(\cdot, t))
          +\int_\Omega \frac{(u(\cdot, t)+1)^{m-p+1}}{u(\cdot, t)}|\nabla u(\cdot, t)|^2
     \le 2\chi\alpha\int_\Omega (u(\cdot, t)-\baru)^2
\end{align}
%
for all $t>0$. 
\end{lem}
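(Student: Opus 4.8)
The plan is to test the first equation of \eqref{ARP} against $\Phi'(u)$ and integrate over $\Omega$. Since $\Phi''(s)=\frac{1}{s(s+1)^{p-2}}$, the left-hand side produces $\frac{d}{dt}\int_\Omega\Phi(u)$, and after integrating by parts (the boundary terms vanish by the no-flux conditions) the diffusion term yields
\begin{align*}
\int_\Omega\Phi''(u)(u+1)^{m-1}|\nabla u|^2
=\int_\Omega\frac{(u+1)^{m-p+1}}{u}|\nabla u|^2,
\end{align*}
which is exactly the second term on the left of \eqref{DI Phi}. The repulsion term $\xi u(u+1)^{q-2}\nabla w$ contributes, after integration by parts, a term of the form $-\xi\int_\Omega\Phi''(u)\,u(u+1)^{q-2}\nabla u\cdot\nabla w$; using $\Phi''(u)\,u(u+1)^{q-2}=(u+1)^{q-p}$ and the identity $-\Delta w=\gamma u-\delta w\le \gamma u$ (with $w\ge 0$), together with $q\le p$ from \eqref{condi1}–\eqref{condi2}, one should be able to show this term is $\le 0$ (heuristically, it has the sign of $-\xi\gamma\int$ a nonnegative quantity). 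The attraction term is the crucial one: after integration by parts it gives $\chi\int_\Omega\Phi''(u)\,u(u+1)^{p-2}\nabla u\cdot\nabla v=\chi\int_\Omega\nabla u\cdot\nabla v$, since $\Phi''(u)\,u(u+1)^{p-2}=1$ — and this is precisely why $\Phi$ is defined with the exponent $p-2$ inside.

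So the core computation reduces to controlling $\chi\int_\Omega\nabla u\cdot\nabla v$. Here I would invoke Lemma~\ref{tech}: the identity \eqref{uvV} rewrites $\int_\Omega\nabla u\cdot\nabla v=\alpha\int_\Omega(u-\baru)^2-\beta\int_\Omega(u-\baru)V$. Multiplying by $\chi$ gives
\begin{align*}
\chi\int_\Omega\nabla u\cdot\nabla v
=\chi\alpha\int_\Omega(u-\baru)^2-\chi\beta\int_\Omega(u-\baru)V.
\end{align*}
The cross term $-\chi\beta\int_\Omega(u-\baru)V$ must then be absorbed. By Young's inequality it is bounded by $\chi\alpha\int_\Omega(u-\baru)^2+\tfrac{\chi\beta^2}{4\alpha}\int_\Omega V^2$, and then the $\int_\Omega V^2$ term is dominated using \eqref{VL2} from Lemma~\ref{L2est V, W}, which bounds it by $\tfrac{\alpha^2}{\beta^2}\int_\Omega(u-\baru)^2$. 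Collecting constants carefully should yield a bound of the form $\chi\,c\int_\Omega(u-\baru)^2$ and, with the correct bookkeeping (one possibility is to use Young's inequality with a sharper weight so the coefficients combine to exactly $2\chi\alpha$), one arrives at the right-hand side $2\chi\alpha\int_\Omega(u-\baru)^2$ of \eqref{DI Phi}.

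I would therefore organize the proof as: (1) justify that $\Phi$, $\Phi'$, $\Phi''$ have the stated form and that testing is legitimate for the classical solution; (2) carry out the integration by parts for the three flux terms and simplify each using the algebraic identities for $\Phi''$; (3) discard the repulsion term by showing its sign, using $q\le p$, $w\ge 0$ and the third equation of \eqref{ARP}; (4) apply Lemma~\ref{tech}, then Young's inequality and Lemma~\ref{L2est V, W} to the attraction term; (5) combine. The main obstacle I anticipate is step (3)–(4): making the repulsion term nonpositive requires handling the density-dependent factor $(u+1)^{q-p}$ against $\nabla u\cdot\nabla w$ correctly (likely by writing $\nabla u\cdot\nabla w$ in terms of a gradient of a primitive of $(u+1)^{q-p}$ so that another integration by parts brings in $-\Delta w$), and then keeping the constants in the attraction estimate precisely tuned so the final coefficient is exactly $2\chi\alpha$ rather than some larger multiple — this is where the smallness condition \eqref{condi4} will later be spent when \eqref{DI Phi} is combined with the Poincaré–Sobolev inequality of Lemma~\ref{PSlemma}.
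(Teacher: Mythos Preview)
Your overall structure matches the paper's proof: test with $\Phi'(u)$, integrate by parts, simplify via $\Phi''(u)=\frac{1}{u(u+1)^{p-2}}$, and handle the three flux terms separately. Your treatment of the diffusion term is correct, and your plan for the attraction term is essentially the paper's, except that the paper uses Cauchy--Schwarz together with \eqref{VL2} directly, which yields $-\chi\beta\int_\Omega(u-\baru)V\le\chi\beta\|u-\baru\|_{L^2}\|V\|_{L^2}\le\chi\alpha\int_\Omega(u-\baru)^2$ in one clean step and hence the total $2\chi\alpha$. Your Young-inequality route also works provided you take the optimal weight (your stated split gives $\tfrac{9}{4}\chi\alpha$, not $2\chi\alpha$; the sharp choice is the one corresponding to Cauchy--Schwarz).

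The genuine gap is step~(3). First, you have the hypothesis backwards: conditions \eqref{condi1}--\eqref{condi2} say $p\le q$, not $q\le p$, so $(u+1)^{q-p}\ge 1$ and no smallness of that factor is available. Second, the heuristic ``$-\Delta w=\gamma u-\delta w\le\gamma u$ gives a good sign'' does not close. After the integration by parts you correctly anticipate, one gets
\[
I=\frac{\xi}{q-p+1}\int_\Omega(u+1)^{q-p+1}\Delta w
 =\frac{\xi\delta}{q-p+1}\int_\Omega(u+1)^{q-p+1}w
  -\frac{\xi\gamma}{q-p+1}\int_\Omega u(u+1)^{q-p+1},
\]
and the first term on the right is \emph{nonnegative}, competing with the second. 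The paper's device is to rewrite this as $\frac{\xi\delta}{q-p+1}\int_\Omega(u+1)^{q-p+1}\big(w+\tfrac{\gamma}{\delta}\big)-\frac{\xi\gamma}{q-p+1}\int_\Omega(u+1)^{q-p+2}$, observe from the shifted equation $0=\Delta\big(w+\tfrac{\gamma}{\delta}\big)+\gamma(u+1)-\delta\big(w+\tfrac{\gamma}{\delta}\big)$ the elliptic estimate $\big\|w+\tfrac{\gamma}{\delta}\big\|_{L^{q-p+2}(\Omega)}\le\tfrac{\gamma}{\delta}\|u+1\|_{L^{q-p+2}(\Omega)}$, and then apply H\"older with exponents $\tfrac{q-p+2}{q-p+1}$ and $q-p+2$ so that the two pieces cancel \emph{exactly}, yielding $I\le 0$. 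This precise cancellation is the missing idea; without it the repulsion contribution cannot simply be discarded.
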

%

\begin{proof}
The first equation in \eqref{ARP} and the identity 
$\Phi''(u)=\frac{1}{u(u+1)^{p-2}}$ 
as well as straightforward calculations imply that 
%
\begin{align}\label{DI Phi1}
      &\frac{d}{dt}\int_\Omega \Phi(u) \notag \\
      &\quad\,
            =\int_\Omega \Phi'(u)
            \nabla\cdot \big(
            (u+1)^{m-1}\nabla u
            -\chi u(u+1)^{p-2}\nabla v
            +\xi u(u+1)^{q-2}\nabla w
            \big) \notag \\
      &\quad\,=-\int_\Omega \Phi''(u)\cdot (u+1)^{m-1}|\nabla u|^2 \notag \\
      &\qquad\ 
           +\chi\int_\Omega \Phi''(u)\cdot u(u+1)^{p-2}\nabla u \cdot \nabla v
           -\xi\int_\Omega \Phi''(u)\cdot u(u+1)^{q-2}\nabla u \cdot \nabla w \notag\\
      &\quad\,
           =-\int_\Omega \frac{(u+1)^{m-p+1}}{u}|\nabla u|^2
           +\chi\int_\Omega \nabla u \cdot \nabla v
           -\xi\int_\Omega (u+1)^{q-p}\nabla u \cdot \nabla w
\end{align}
%
for all $t>0$. 
We now show that the third term on the right-hand side 
can be estimated by zero. 
Taking into account the third equation in \eqref{ARP}, 
we observe that
%
\begin{align}\label{Est I}
      I &:= -\xi\int_\Omega (u+1)^{q-p}\nabla u \cdot \nabla w \notag\\
        &= -\xi\int_\Omega \nabla\Big[\int_0^u (s+1)^{q-p}\,ds\Big] \cdot \nabla w \notag\\
        &= \frac{\xi}{q-p+1}\int_\Omega (u+1)^{q-p+1}\Delta w \notag\\
        &= \frac{\xi}{q-p+1}\int_\Omega (u+1)^{q-p+1}(\delta w-\gamma u) \notag\\
        &= \frac{\xi\delta}{q-p+1}\int_\Omega (u+1)^{q-p+1}w
             -\frac{\xi\gamma}{q-p+1}\int_\Omega u(u+1)^{q-p+1} \notag\\
        &=\frac{\xi\delta}{q-p+1}\int_\Omega (u+1)^{q-p+1}\Big(w+\frac{\gamma}{\delta}\Big)
             -\frac{\xi\gamma}{q-p+1}\int_\Omega (u+1)^{q-p+2}.
\end{align}
%
Again by the third equation in \eqref{ARP}, we have
%
\begin{align*}
      0=\Delta \Big(w+\frac{\gamma}{\delta}\Big)
          +\gamma (u+1)-\delta\Big(w+\frac{\gamma}{\delta}\Big),
\end{align*}
%
which yields that 
%
\begin{align}\label{est w+d/g}
            \Big\|w(\cdot, t)+\frac{\gamma}{\delta}\Big\|_{L^{q-p+2}(\Omega)}
       \le \frac{\gamma}{\delta}\|u(\cdot, t)+1\|_{L^{q-p+2}(\Omega)}
\end{align}
%
for all $t>0$. 
Therefore, applying the H$\ddot{{\rm o}}$lder inequality 
with exponents $\frac{q-p+1}{q-p+2}$ and $\frac{1}{q-p+2}$ to \eqref{Est I} 
and using the estimate \eqref{est w+d/g}, 
we obtain
%
\begin{align*}
   I &\le \frac{\xi\delta}{q-p+1}\Big(\int_\Omega (u+1)^{q-p+2}\Big)^{\frac{q-p+1}{q-p+2}}
             \Big(\int_\Omega \Big(w+\frac{\gamma}{\delta}\Big)^{q-p+2}\Big)^{\frac{1}{q-p+2}} \notag\\
     &\quad\, -\frac{\xi\gamma}{q-p+1}\int_\Omega (u+1)^{q-p+2} \notag\\
     &\le \frac{\xi\delta}{q-p+1}\Big(\int_\Omega (u+1)^{q-p+2}\Big)^{\frac{q-p+1}{q-p+2}}
             \cdot \frac{\gamma}{\delta}\Big(\int_\Omega (u+1)^{q-p+2}\Big)^{\frac{1}{q-p+2}} \notag\\
     &\quad\, -\frac{\xi\gamma}{q-p+1}\int_\Omega (u+1)^{q-p+2} \notag\\
     &=0, 
\end{align*}
%
which along with \eqref{DI Phi1} implies
%
\begin{align}\label{DI Phi2}
      \frac{d}{dt}\int_\Omega \Phi(u)+\int_\Omega \frac{(u+1)^{m-p+1}}{u}|\nabla u|^2
      \le \chi\int_\Omega \nabla u \cdot \nabla v
\end{align}
%
for all $t>0$. 
Also, in view of \eqref{uvV} we see that 
%
\begin{align}\label{uvV2}
                 \chi\int_\Omega \nabla u \cdot \nabla v
              = \chi\alpha\int_\Omega (u-\baru)^2
                  -\chi\beta\int_\Omega (u-\baru)V.
\end{align}
%
Inserting \eqref{uvV2} into \eqref{DI Phi2} entails that 
%
\begin{align}\label{DI Phi3}
      \frac{d}{dt}\int_\Omega \Phi(u)+\int_\Omega \frac{(u+1)^{m-p+1}}{u}|\nabla u|^2
      \le \chi\alpha\int_\Omega (u-\baru)^2
                  -\chi\beta\int_\Omega (u-\baru)V
\end{align}
%
for all $t>0$. 
By means of the Cauchy--Schwarz inequality and 
the estimate \eqref{VL2} we derive
%
\begin{align*}
              -\chi\beta\int_\Omega (u-\baru)V 
      &\le \chi\beta\Big(\int_\Omega (u-\baru)^2\Big)^{\frac{1}{2}}
                           \Big(\int_\Omega V^2\Big)^{\frac{1}{2}} \notag\\
      &\le \chi\alpha\int_\Omega (u-\baru)^2.
\end{align*}
%
This together with \eqref{DI Phi3} proves \eqref{DI Phi}. 
\end{proof}

We next estimate the term containing $|\nabla u|^2$ in \eqref{DI Phi} 
by $\int_\Omega (u-\baru)^2$ in view of Lemma~\ref{PSlemma}. 

%
\begin{lem}\label{Est Diff}
Let $m, p$ satisfy \eqref{condi3}. 
Then the first component $u$ of the solution to \eqref{ARP}
fulfills that 
%
\begin{align}\label{Diff est}
           \int_\Omega \frac{(u(\cdot, t)+1)^{m-p+1}}{u(\cdot, t)}|\nabla u(\cdot, t)|^2
     \ge \frac{1}{C_{\langle p-m\rangle}\|u_0\|_{L^1(\Omega)}^{p-m}}\int_\Omega (u(\cdot, t)-\baru)^2, 
\end{align}
%
for all $t>0$, 
where $C_{\langle p-m\rangle}>0$ is a constant appearing 
in {\rm Lemma~\ref{PSlemma}} with $\theta=p-m$. 
\end{lem}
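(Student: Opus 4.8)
The plan is to rewrite the left-hand side of \eqref{Diff est} as a multiple of $\int_\Omega|\nabla\psi(u)|^2$ for a suitable primitive $\psi$, and then apply the Poincar\'e--Sobolev inequality from Lemma~\ref{PSlemma} with $\theta=p-m$, which is admissible precisely because of \eqref{condi3}. Concretely, I would set
\begin{align*}
\psi(s):=\int_0^s \frac{(\eta+1)^{\frac{m-p+1}{2}}}{\eta^{\frac12}}\,d\eta,
\end{align*}
so that $\psi'(u)^2=\frac{(u+1)^{m-p+1}}{u}$ and hence $|\nabla\psi(u)|^2=\frac{(u+1)^{m-p+1}}{u}|\nabla u|^2$; one should first check that $\psi$ is well defined (the integrand is integrable near $0$ since the singularity is only $\eta^{-1/2}$) and that $\psi(u)\in W^{1,2}(\Omega)$. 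Thus the left-hand side equals $\int_\Omega|\nabla\psi(u)|^2=\|\nabla\psi(u)\|_{L^2(\Omega)}^2$.

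However, Lemma~\ref{PSlemma} with $\theta=p-m$ controls $\|\varphi-\overline\varphi\|_{L^2(\Omega)}$ by $\|\nabla\varphi\|_{L^{2/(p-m+1)}(\Omega)}$, i.e.\ by an $L^r$-norm of the gradient with $r=\frac{2}{p-m+1}\le 2$, not by the $L^2$-norm. So the next step is to bridge the gap between $\|\nabla\psi(u)\|_{L^2}$ and $\|\nabla(\text{something})\|_{L^r}$. The natural device is to choose a \emph{different} primitive, namely
\begin{align*}
\Psi(s):=\int_0^s (\eta+1)^{\frac{m-p}{2}}\,d\eta,
\end{align*}
and use H\"older's inequality to compare $\int_\Omega|\nabla\Psi(u)|^{r}$ with $\int_\Omega|\nabla\psi(u)|^2$ together with a power of $\int_\Omega(u+1)^{?}$, absorbing the remaining mass factor via the uniform $L^1$-bound $\|u(\cdot,t)\|_{L^1(\Omega)}=\|u_0\|_{L^1(\Omega)}$ (which holds since the first equation is in divergence form with no-flux boundary conditions). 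One then applies Lemma~\ref{PSlemma} to $\varphi=\Psi(u)$, notes that $\overline{\Psi(u)}$ differs from a harmless constant, and finally relates $\|\Psi(u)-\overline{\Psi(u)}\|_{L^2(\Omega)}$ back to $\|u-\baru\|_{L^2(\Omega)}$ using that $\Psi'$ is bounded below and the mean value theorem (or convexity/monotonicity of $\Psi$). I expect this chain of elementary but slightly delicate exponent bookkeeping — getting every H\"older exponent, every power of $\|u_0\|_{L^1}$, and the admissibility range \eqref{condi3} to line up so that the constant comes out exactly $C_{\langle p-m\rangle}\|u_0\|_{L^1(\Omega)}^{p-m}$ — to be the main technical obstacle.

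An alternative, possibly cleaner route avoids two primitives: take directly $\varphi:=(u+1)^{\frac{m-p+2}{2}}$ (or the corresponding power making $\nabla\varphi$ proportional to $(u+1)^{\frac{m-p}{2}}\nabla u$), observe that
\begin{align*}
\int_\Omega \frac{(u+1)^{m-p+1}}{u}|\nabla u|^2 \ge \int_\Omega (u+1)^{m-p}|\nabla u|^2
\end{align*}
since $u\le u+1$, express the right-hand side as $c\int_\Omega|\nabla\varphi|^2$, and then use H\"older plus Lemma~\ref{PSlemma} with $\theta=p-m$. Either way, the three ingredients are: (i) the $L^1$ conservation of mass to supply the $\|u_0\|_{L^1}^{p-m}$ factor; (ii) the range restriction \eqref{condi3}, which is exactly what makes $\theta=p-m$ an allowed exponent in Lemma~\ref{PSlemma}; and (iii) a monotonicity/mean-value argument converting the $L^2$ control of $\varphi-\overline\varphi$ into $L^2$ control of $u-\baru$. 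The hard part is purely the constant tracking, so I would carry out the computation carefully once with explicit H\"older exponents rather than hiding it behind generic constants, since the sharp value $\frac{1}{C_{\langle p-m\rangle}\|u_0\|_{L^1}^{p-m}}$ is what later feeds into the smallness condition \eqref{condi4}.
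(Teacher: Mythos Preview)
Your plan runs into a genuine obstacle at step~(iii). For either choice $\Psi(s)=\int_0^s(\eta+1)^{(m-p)/2}\,d\eta$ or $\varphi=(u+1)^{(m-p+2)/2}$, the relevant derivative behaves like $(s+1)^{(m-p)/2}$, and since $p\ge m$ by \eqref{condi3} this has \emph{no} uniform positive lower bound on $[0,\infty)$. The mean-value inequality therefore goes the wrong way: it gives $|\Psi(u)-\Psi(\baru)|\le|u-\baru|$, while any reverse bound would require $\Psi'(s)\ge(\|u\|_{L^\infty}+1)^{(m-p)/2}$, which injects an $L^\infty$ factor rather than the needed $\|u_0\|_{L^1}^{p-m}$. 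On top of that, $\overline{\Psi(u)}\neq\Psi(\baru)$ in general, so even the pointwise comparison is not immediate. You would not recover the constant in \eqref{Diff est}, and the smallness condition \eqref{condi4} would not follow.

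The paper sidesteps this by applying Lemma~\ref{PSlemma} with $\theta=p-m$ \emph{directly to $\varphi=u$}, so no back-conversion is ever needed. First one drops $(u+1)^{m-p+1}\ge u^{m-p+1}$ (legitimate because \eqref{condi3} forces $m-p+1\ge0$) to obtain
\[
\int_\Omega\frac{(u+1)^{m-p+1}}{u}|\nabla u|^2\ \ge\ \int_\Omega\frac{|\nabla u|^2}{u^{p-m}}.
\]
Then one writes $|\nabla u|^{\frac{2}{p-m+1}}=\bigl(|\nabla u|^2 u^{-(p-m)}\bigr)^{\frac{1}{p-m+1}}\cdot u^{\frac{p-m}{p-m+1}}$ and applies H\"older together with mass conservation $\int_\Omega u=\int_\Omega u_0$, which yields
\[
\int_\Omega\frac{|\nabla u|^2}{u^{p-m}}\ \ge\ \|u_0\|_{L^1(\Omega)}^{-(p-m)}\,\|\nabla u\|_{L^{2/(p-m+1)}(\Omega)}^{2}.
\]
Now Lemma~\ref{PSlemma} applied to $u$ itself finishes the job. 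The H\"older step \emph{is} the bridge you were looking for, but it links $\|\nabla u\|_{L^{2/(p-m+1)}}$ to the weighted Dirichlet integral $\int_\Omega|\nabla u|^2/u^{p-m}$, not two different nonlinear primitives; this is what makes the exact constant $C_{\langle p-m\rangle}\|u_0\|_{L^1}^{p-m}$ fall out cleanly.
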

%

\begin{proof}
We first estimate 
%
\begin{align}\label{Diff est 2}
           \int_\Omega \frac{(u+1)^{m-p+1}}{u}|\nabla u|^2
     \ge \int_\Omega \frac{|\nabla u|^2}{u^{p-m}}.
\end{align}
%
We next estimate the right-hand side of this inequality from below. 
Invoking H$\ddot{{\rm o}}$lder's inequality 
with exponents $\frac{1}{p-m+1} \le 1$ and $\frac{p-m}{p-m+1} \le 1$, 
because $p-m \ge 0$, 
and noting from the first equation in \eqref{ARP} that 
the mass conservation $\int_\Omega u(\cdot, t)=\int_\Omega u_0$ holds 
for all $t>0$, we infer 
%
\begin{align*}
             \int_\Omega |\nabla u|^{\frac{2}{p-m+1}}
     &=   \int_\Omega \Big(\frac{|\nabla u|^2}{u^{p-m}}\Big)^{\frac{1}{p-m+1}} 
                                 \cdot u^{\frac{p-m}{p-m+1}} \notag\\
     &\le \Big(\int_\Omega \frac{|\nabla u|^2}{u^{p-m}}\Big)^{\frac{1}{p-m+1}}
             \Big(\int_\Omega u\Big)^{\frac{p-m}{p-m+1}} \notag\\
     &= \Big(\int_\Omega \frac{|\nabla u|^2}{u^{p-m}}\Big)^{\frac{1}{p-m+1}}
             \Big(\int_\Omega u_0\Big)^{\frac{p-m}{p-m+1}}, 
\end{align*}
%
which means that 
%
\begin{align}\label{Diff est 4}
             \int_\Omega \frac{|\nabla u|^2}{u^{p-m}}
      \ge \Big(\int_\Omega u_0\Big)^{-(p-m)}
            \Big(\int_\Omega |\nabla u|^{\frac{2}{p-m+1}}\Big)^{p-m+1}.
\end{align}
%
Here we can apply Lemma~\ref{PSlemma} with 
$\theta=p-m$ to the term containing $|\nabla u|^{\frac{2}{p-m+1}}$. 
Indeed, we see from \eqref{condi3} that 
$p-m$ satisfies the assumption of Lemma~\ref{PSlemma}. 
Thus, employing the inequality \eqref{PS} with $\theta=p-m$, 
we can find a constant $C_{\langle p-m\rangle}>0$ such that
%
\begin{align}\label{Diff est 5}
            \Big(\int_\Omega |\nabla u|^{\frac{2}{p-m+1}}\Big)^{p-m+1}
      \ge \frac{1}{C_{\langle p-m\rangle}}\int_\Omega (u-\baru)^2. 
\end{align}
%
Collecting \eqref{Diff est 4} and \eqref{Diff est 5} in \eqref{Diff est 2}, 
we establish \eqref{Diff est}. 
\end{proof}

We finally derive an energy inequality which implies 
boundedness of the integral of $\|u(\cdot, t)-\baru\|_{L^2(\Omega)}^2$ 
with respect to $t$ over $(0, \infty)$.
%
\begin{lem}\label{int u-u_0 finite}
Let $m, p$ fulfill \eqref{condi3} and 
let $C_{\langle p-m\rangle}>0$ be a constant 
as in {\rm Lemma~\ref{PSlemma}} with $\theta=p-m$. 
Then the first component $u$ of the solution to \eqref{ARP} 
satisfies that 
%
\begin{align}\label{est int Phi sum}
           \frac{d}{dt}\int_\Omega \Phi(u(\cdot, t))
          +\Big[\frac{1}{C_{\langle p-m\rangle}\|u_0\|_{L^1(\Omega)}^{p-m}}
                   -2\chi\alpha\Big]\int_\Omega (u(\cdot, t)-\baru)^2
     \le 0
\end{align}
%
for all $t>0$. 
In particular, if $u_0$ meets \eqref{condi4}, then 
%
\begin{align}\label{int u-u_0 fin.}
         \int_0^\infty \int_\Omega (u-\baru)^2<\infty.
\end{align}
%
\end{lem}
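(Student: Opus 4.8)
The plan is to obtain \eqref{est int Phi sum} almost immediately by feeding the lower bound for the diffusion integral from Lemma~\ref{Est Diff} into the differential inequality \eqref{DI Phi} of Lemma~\ref{DI int phi}, and then to derive \eqref{int u-u_0 fin.} by integrating the resulting inequality in time and using the nonnegativity of $\Phi$. For the first part, since \eqref{condi3} is in force Lemma~\ref{Est Diff} applies with $\theta=p-m$ and bounds the gradient integral on the left-hand side of \eqref{DI Phi} from below by $\frac{1}{C_{\langle p-m\rangle}\|u_0\|_{L^1(\Omega)}^{p-m}}\int_\Omega(u(\cdot,t)-\baru)^2$; inserting this into \eqref{DI Phi} and collecting the resulting multiples of $\int_\Omega(u-\baru)^2$ on one side produces exactly \eqref{est int Phi sum} for all $t>0$.

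For the second part I would first record that $\Phi\ge0$ on $[0,\infty)$: from $\Phi''(s)=\frac{1}{s(s+1)^{p-2}}>0$ on $(0,\infty)$ together with $\Phi(1)=\Phi'(1)=0$ one sees that $\Phi$ is convex with global minimum value $0$ attained at $s=1$; moreover $\Phi$ extends continuously to $s=0$, because the inner integral in the definition of $\Phi$ diverges only logarithmically as its upper limit tends to $0$, which is integrable in $\sigma$. Hence $\int_\Omega\Phi(u_0)<\infty$ since $u_0\in C^0(\cl\Omega)$. Setting $\ep_0:=\frac{1}{C_{\langle p-m\rangle}\|u_0\|_{L^1(\Omega)}^{p-m}}-2\chi\alpha$, the hypothesis \eqref{condi4} says precisely that $\ep_0\ge0$. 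Integrating \eqref{est int Phi sum} over $(0,T)$ (using continuity of $t\mapsto\int_\Omega\Phi(u(\cdot,t))$ as $t\downarrow0$, which follows from $u(\cdot,t)\to u_0$ in $C^0(\cl\Omega)$ and continuity of $\Phi$) and discarding the nonnegative term $\int_\Omega\Phi(u(\cdot,T))$, I obtain
\[
  \ep_0\int_0^T\int_\Omega(u-\baru)^2 \le \int_\Omega\Phi(u_0)
\]
for every $T>0$, and letting $T\to\infty$ yields \eqref{int u-u_0 fin.}.

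All of the analytic weight is carried by Lemmas~\ref{DI int phi} and \ref{Est Diff}, so what is left is a routine time integration; the only delicate point is the coefficient $\ep_0$, since \eqref{int u-u_0 fin.} genuinely requires $\ep_0>0$ whereas \eqref{condi4} as written only gives $\ep_0\ge0$ and permits equality. I would close this gap by taking the inequality in \eqref{condi4} to be strict. I expect this borderline case to be the main --- and essentially the only --- obstacle in the argument.
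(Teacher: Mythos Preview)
Your argument is essentially identical to the paper's: combine Lemmas~\ref{DI int phi} and~\ref{Est Diff} to obtain \eqref{est int Phi sum}, integrate in time, and use nonnegativity of $\Phi$ together with \eqref{condi4} to conclude \eqref{int u-u_0 fin.}. Your remark about the borderline case $\ep_0=0$ is apt, but it is not a divergence from the paper --- the paper's own proof handles \eqref{condi4} in exactly the same way and likewise tacitly assumes the coefficient is strictly positive.
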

%

\begin{proof}
Due to Lemmas~\ref{DI int phi} and~\ref{Est Diff}, 
we have
%
\begin{align*}
           \frac{d}{dt}\int_\Omega \Phi(u)
          +\frac{1}{C_{\langle p-m\rangle}\|u_0\|_{L^1(\Omega)}^{p-m}}
             \int_\Omega (u-\baru)^2
     \le 2\chi\alpha\int_\Omega (u-\baru)^2 
\end{align*}
%
for all $t>0$, which entails \eqref{est int Phi sum}. 
Also, integrating the inequality \eqref{est int Phi sum} over $(0, t)$, we obtain 
%
\begin{align*}
           \int_\Omega \Phi(u(\cdot, t))
          +\Big[\frac{1}{C_{\langle p-m\rangle}\|u_0\|_{L^1(\Omega)}^{p-m}}
                   -2\chi\alpha\Big]\int_0^t\int_\Omega (u-\baru)^2
     \le \int_\Omega \Phi(u_0)
\end{align*}
%
for all $t>0$, 
which in conjunction with the positivity of $\Phi$ enables us to see that 
%
\begin{align*}
           \Big[\frac{1}{C_{\langle p-m\rangle}\|u_0\|_{L^1(\Omega)}^{p-m}}
                   -2\chi\alpha\Big]\int_0^t\int_\Omega (u-\baru)^2
     \le \int_\Omega \Phi(u_0).
\end{align*}
%
In view of \eqref{condi4}, 
taking the limit $t \to \infty$, 
we derive \eqref{int u-u_0 fin.}. 
\end{proof}

We are now in a position to complete the proof of Theorem~\ref{stab}. 

\begin{prthstab} 
We first prove an $L^\infty$-convergence of $u$. 
Since the first component $u$ of the solution to \eqref{ARP} 
has the property 
$\sup_{t>0}\|u(\cdot, t)\|_{L^\infty(\Omega)}<\infty$, 
standard parabolic regularity theory (\cite{LSU-1968}) yields 
that there exist $\sigma \in (0, 1)$ and $c_1>0$ such that
%
\begin{align}\label{reg}
        \|u\|_{C^{2+\sigma, 1+\frac{\sigma}{2}}(\cl{\Omega} \times [0, \infty))}
        \le c_1, 
\end{align}
%
which implies that the nonnegative function 
$t \mapsto \|u(\cdot, t)-\baru\|_{L^2(\Omega)}^2$
is uniformly continuous in $[0, \infty)$. 
Hence, taking into account \eqref{int u-u_0 fin.}, 
we infer from Lemma~\ref{uni} that 
%
\begin{align}\label{L2conv}
           \|u(\cdot, t)-\baru\|_{L^2(\Omega)} \to 0\quad {\rm as}\ t \to \infty.
\end{align}
%
Also, by the Gagliardo--Nirenberg inequality we can find $c_2>0$ such that 
%
\begin{align}\label{GN}
                 \|u(\cdot, t)-\baru\|_{L^\infty(\Omega)}
      \le c_2\|u(\cdot, t)-\baru\|_{W^{1,\infty}(\Omega)}^{\frac{n}{n+2}}
                 \|u(\cdot, t)-\baru\|_{L^2(\Omega)}^{\frac{2}{n+2}}.
\end{align}
%
Noting from \eqref{reg} that 
$\|u(\cdot, t)-\baru\|_{W^{1,\infty}(\Omega)} 
\le c_3 := c_1+\baru$, 
we obtain that \eqref{L2conv} and \eqref{GN} assert
%
\begin{align*}
            \|u(\cdot, t)-\baru\|_{L^\infty(\Omega)} \to 0\quad {\rm as}\ t \to \infty,
\end{align*}
%
which means that \eqref{stab u} holds. 
We next show $L^\infty$-convergences of $v$ and $w$. 
In view of Lemma~\ref{L2est V, W}, 
we have from \eqref{L2conv} that
%
\begin{align*}
                \Big\|v(\cdot, t)-\frac{\alpha}{\beta}\baru\Big\|_{L^2(\Omega)}^2
        &\le \frac{\alpha^2}{\beta^2}
                \|u(\cdot, t)-\baru\|_{L^2(\Omega)}^2 \to 0\quad {\rm as}\ t \to \infty 
\intertext{and}
                \Big\|w(\cdot, t)-\frac{\gamma}{\delta}\baru\Big\|_{L^2(\Omega)}^2
        &\le \frac{\gamma^2}{\delta^2}
                \|u(\cdot, t)-\baru\|_{L^2(\Omega)}^2 \to 0\quad {\rm as}\ t \to \infty. 
\end{align*}
%
Therefore, by a similar argument as in the derivation of 
the $L^\infty$-convergence of $u$, 
we can arrive to \eqref{stab v} and \eqref{stab w}. \qed
\end{prthstab}

%
%

\begin{remark}
In Theorem~\ref{stab} 
we can remove the upper bounds for $p-m$ in \eqref{condi3} 
by modifying a condition for $u_0$. 
Indeed, without the upper bounds for $p-m$, 
stabilization in \eqref{ARP} holds in the following form: 

Let $n \in \N$.
Assume that $p, q, \chi, \xi, \alpha, \gamma$ satisfy either 
\eqref{condi1} or \eqref{condi2}, 
and that $m, p$ fulfill $p-m \ge 0$. 
Suppose that $u_0$ satisfies \eqref{initial}. 
Then one can find $\ep_0>0$ such that if 
\begin{align}\label{smallness}
\|u_0\|_{L^\infty(\Omega)} \le \ep 
\end{align}
for all $\ep \in (0, \ep_0)$, 
then the solution $(u, v, w)$ of the problem~\eqref{ARP} fulfills 
\eqref{stab u}--\eqref{stab w}. 

We briefly give the proof. 
By the proof of \cite[Lemma~3.3]{CY-submitted}, 
we see that there exists $\sigma>n$ such that 
\begin{align}\label{Lsigest}
\|u(\cdot, t)\|_{L^\sigma (\Omega)} 
\le \max\{C_\sigma(\|u_0\|_{L^1(\Omega)}),\ \|u_0\|_{L^\sigma(\Omega)}\}
\end{align}
for all $t>0$ with 
$C_\sigma(\|u_0\|_{L^1(\Omega)})
=c_1\|u_0\|_{L^1(\Omega)}(1+\|u_0\|_{L^1(\Omega)}^{c_2})$, 
where $c_1,c_2>0$ are constants independent of $u_0$. 
Also, in view of the proof of \cite[Lemma~A.1]{TW-2012}
we have 
\begin{align*}
\|u(\cdot, t)\|_{L^\infty(\Omega)} 
\le c_3\sup_{t>0}\|u(\cdot, t)\|_{L^\sigma (\Omega)} ^{c_4}
     +\|u_0\|_{L^\infty(\Omega)}
\end{align*}
for all $t>0$, where $c_3, c_4>0$ are constants independent of $u_0$. 
This along with \eqref{Lsigest} derives
\begin{align*}
\|u(\cdot, t)\|_{L^\infty(\Omega)} 
&\le c_5\max\Big\{\|u_0\|_{L^\infty(\Omega)}^{c_4}
(1+\|u_0\|_{L^\infty(\Omega)}^{c_2})^{c_4},\ \|u_0\|_{L^\infty(\Omega)}\Big\}+\|u_0\|_{L^\infty(\Omega)}\\
&=:u_{\rm max} 
\end{align*}
for all $t>0$, where $c_5>0$ 
are constants independent of $u_0$. 
Thus we can find $\ep_0>0$ such that 
if $\|u_0\|_{L^\infty(\Omega)} \le \ep$ for all $\ep \in (0, \ep_0)$, 
then $u_{\rm max} \le
\big(\frac{1}{2\chi\alpha C_{\rm PW}^2}\big)^{\frac{1}{p-m}}$, 
where $C_{\rm PW}>0$ is a constant appearing 
in the Poincar\'e--Wirtinger inequality
\begin{align*}
\|\varphi-\cl{\varphi}\|_{L^2(\Omega)} 
\le C_{\rm PW}\|\nabla \varphi\|_{L^2(\Omega)}\quad 
\mbox{for all}\ \varphi \in W^{1,2}(\Omega). 
\end{align*}
Employing this inequality and 
noting that $p-m \ge 0$, we obtain that
\begin{align*}
        \|u(\cdot,t)-\baru\|_{L^2(\Omega)}^2 
&\le C_{\rm PW}^2\|\nabla u\|_{L^2(\Omega)}^2
\le C_{\rm PW}^2u_{\rm max}^{p-m}
                             \int_\Omega \frac{|\nabla u|^2}{u^{p-m}}
\end{align*}
for all $t>0$, which in conjunction with \eqref{Diff est 2} entails that 
\begin{align*}
\int_\Omega \frac{(u+1)^{m-p+1}}{u}|\nabla u|^2
\ge \frac{1}{C_{\rm PW}^2u_{\rm max}^{p-m}}\int_\Omega (u-\baru)^2. 
\end{align*}
This along with Lemma~\ref{DI int phi} implies that 
\begin{align*}
           \frac{d}{dt}\int_\Omega \Phi(u)
          +\Big[\frac{1}{C_{\rm PW}^2u_{\rm max}^{p-m}}
                   -2\chi\alpha\Big]\int_\Omega (u-\baru)^2
     \le 0
\end{align*}
for all $t>0$. 
Noting that if $\|u_0\|_{L^\infty(\Omega)} \le \ep$ for all $\ep \in (0, \ep_0)$, 
then $u_{\rm max} \le
\big(\frac{1}{2\chi\alpha C_{\rm PW}^2}\big)^{\frac{1}{p-m}}$, 
we can verify $\frac{1}{C_{\rm PW}^2u_{\rm max}^{p-m}}-2\chi\alpha\ge0$ 
when \eqref{smallness} holds. 
Therefore we can derive the conclusion by an argument similar to that 
of the proof of Theorem~\ref{stab}. 
\end{remark}

\section*{Acknowledgments}
The author would like to thank Professor~Tomomi~Yokota 
for his encouragement and helpful comments on the manuscript. 


\end{document}